\newtheorem{theorem}{Theorem}
\newtheorem{lemma}[theorem]{Lemma}
\newtheorem{remark}{Remark}
\begin{document}

\title{\bf A New Parallel-in-time Direct Inverse Method for Nonlinear Differential Equations}
% \title{A New Parallel-in-time First-order Implicit Scheme for Nonlinear Differential Equations}
% \titlerunning{New Parallel-in-time Method for Nonlinear Equations} 

\author{Nail K. Yamaleev\footnote{Corresponding author. Department of Mathematics and Statistics, Old Dominion University
Norfolk, VA 23529, USA. {\it E-mail address:} nyamalee@odu.edu} and Subhash Paudel \\
{\it\small Old Dominion University}}
%\footnote{PhD student.  Department of Mathematics and Statistics, Old Dominion University}}

\date{}

%{Old Dominion University, Norfolk, VA 23529, USA}

%\author{Nail K. Yamaleev \and Subhash Paudel}

%\institute{N. K. Yamaleev,  Professor \at
%              Dept. of Mathematics and Statistics, Old Dominion University, Norfolk, VA 23529, USA \\
%              \email{nyamalee@odu.edu} 
%           \and
%           S. Paudel,  Ph.D. student  \at
%              Dept. of Mathematics and Statistics, Old Dominion University, Norfolk, VA 23529, USA \\
%              \email{spaud002@odu.edu}
%}

% \date{Received: date / Accepted: date}

% The correct dates will be entered by the editor

\maketitle
\begin{abstract}
We present a  new approach to parallelization of the first-order backward difference discretization (BDF1) of the time derivative in partial differential equations, such as the nonlinear heat and viscous Burgers equations. The time derivative term is discretized by using the method of lines based on the implicit BDF1 scheme, while the inviscid and viscous terms are approximated by conventional 2nd-order 3-point central discretizations of the 1st- and 2nd-order  derivatives in each spatial direction.  The global system of nonlinear discrete equations in the space-time domain is solved by the Newton method for all time levels simultaneously.  For the BDF1 discretization,  this all-at-once system at each Newton iteration is block bidiagonal,  which can be inverted directly in a blockwise manner, thus leading to a set of fully decoupled equations associated with each time level.  This allows for an efficient parallel-in-time implementation of the implicit BDF1 discretization for nonlinear differential equations.  
The proposed parallel-in-time method preserves a quadratic rate of convergence of the Newton method of the sequential BDF1 scheme, so that the computational cost of solving each block matrix in parallel is nearly identical to that of the sequential counterpart at each time step. 
Numerical results show that the new parallel-in-time BDF1 scheme provides the speedup of up to $28$ on 32 computing cores for the 2-D nonlinear partial differential equations with both smooth and discontinuous solutions.
% \subclass{65M06 \and 65F05 \and 65Y05}
\end{abstract}

%% main text
\section{Introduction}
%=============
An accurate prediction of unsteady physical phenomena arising in various applications (e.g.,  rotorcraft and turbomachinery flows,  fluid-structure interaction, maneuvering flight conditions, weather prediction, etc.) requires a very large number of time steps, thus drastically increasing the total computational time, because conventional time integrators based on the method of lines are inherently sequential. It should be noted that parallelization of the spatial discretization alone is not enough to achieve such scalability that is required for solving these unsteady flows on modern supercomputers with millions of computing cores. For a fixed number of grid points, increasing the core count decreases the number of degrees of freedom computed on each core, which will eventually lead to the communication overhead dominating the runtime. Therefore, parallel-in-time methods offer a promising direction for achieving this goal.

Parallel time integration algorithms have been an active area of research for 60 years since they were first introduced in %\cite{Nvgl}.  All parallel-in-time methods available in the literature can be divided into five major groups: 1) Parareal--type
methods \cite{LMT,  DM,  Min,  Wu} which can be interpreted as iterative shooting methods,  2) space-time domain decomposition methods \cite{GHN,  MT},  3) time spectral methods for periodic flows \cite{MY,  LSLW}, 4) space-time multigrid methods \cite{Hack,  GN},  and 5) direct time-parallel methods \cite{MR,  MPW,  GH}.  
%Despite the fact that parallel-in-time methods have demonstrated substantial speedups for {\it linear} parabolic equations
%% whose discretization operators are symmetric positive definite matrices
%(e.g., see \cite{Neum,  FFKMSV}),  the parallel performance  of these algorithms for {\it nonlinear} equations especially of the hyperbolic or mixed type is significantly lower and  far from being satisfactory \cite{OS}. 

One of the most popular methods that can be directly used for both linear and nonlinear problems is the Parareal algorithm introduced in \cite{LMT}.  
For a linear system of ordinary differential equations (ODEs) with a constant coefficient matrix $A \in \mathbb{R}^{m \times m}$,  it has been proven that the Parareal method converges  if the governing equation is discretized by using an implicit $L$-stable scheme (e.g., the backward Euler scheme) and $A$ is either positive symmetric positive definite (SPD) \cite{GV} or has complex eigenvalues \cite{Wu2}.
Note,  however,  that the parallel performance of the Parareal-type methods is limited because the so-called coarse grid correction carried out on the coarse grid is inherently sequential, thus dramatically reducing the speedup of the Parareal algorithm.  As a result,  the speedup obtained with the Parareal-type methods for nonlinear problems do not usually exceed 10 regardless of the number of computing cores used \cite{OS}.  Recently,  several attempts have been made to overcome this problem by combining the Parareal algorithm with a diagonalization technique introduced in \cite{MR,MPW},  which is used to parallelize the coarse grid correction step \cite{Wu}.  Unfortunately,  the parallel efficiency of the time-parallelization methods based on diagonalization significantly deteriorates for nonlinear differential equations because  a single time-averaged Jacobian must be used for all time levels considered,  thus leading to the loss of quadratic convergence of the Newton method. Furthermore,  this time averaging effectively imposes a severe constraint on the time interval over which the averaged Jacobian provides a sufficiently accurate approximation of the true Jacobian at each time level,  which is required for convergence of Newton-type methods.  An alternative parallel-in-time approach that can be used for nonlinear problems is  space-time multigrid and multigrid in time (MGRIT) methods \cite{Hack,  HV, FFKMS}.  The space-time algorithms use the multigrid method in the entire space-time domain,  while the MGRIT methods apply multigrid only to the time dimension.  Though these methods work very well for parabolic problems with dominant viscous effects, as has been shown in  \cite{GLWYZ}, the number of iterations required for convergence and consequently the parallel efficiency of the Parareal and multigrid methods deteriorates dramatically as the physical viscosity coefficient decreases and becomes comparable to that used in practical applications governed by nonlinear partial differential equations.
For further details on the parallel efficiency and speedup provided by these methods, we refer the reader to two comprehensive reviews of time-parallel methods presented in \cite{Gan} and \cite{OS}.   As follows from these literature reviews, the existing parallel-in-time methods have not yet demonstrated parallel performance that is required for practical applications.
	
In this paper, we propose a new approach to parallelization of the method of lines for unsteady nonlinear partial differential equations of arbitrary type.  The global system of nonlinear discrete equations in the space-time domain is solved by the Newton method.  For the BDF1 scheme,  the global Jacobian  at each Newton iteration is a block bidiagonal matrix that can be directly inverted in a blockwise manner,  such that all time levels can be solved simultaneously in parallel.  Note that the computational cost of inverting each block matrix  is nearly identical to that of the sequential BDF1 scheme at each Newton iteration.   Since the decoupled and original all-at-once systems are equivalent,  the proposed parallel-in-time method preserves the quadratic convergence rate of Newton iterations,  thus providing a nearly ideal speedup for highly nonlinear nonperiodic problems with both smooth and discontinuous solutions.  This {\it Para}llelization in time based on {\it D}irect {\it In}verse method is hereafter referred to as ParaDIn.   
The proposed ParaDIn method for the implicit BDF1 discretization can be directly combined with standard spatial domain-decomposition algorithms, thus demonstrating a strong potential for obtaining a much higher speedup on large computer platforms as compared with the current state-of-the-art methods based on parallelization of the spatial discretization alone. 
The paper is organized as follows.  In section 1, we present governing equations and the baseline BDF1 scheme.  The all-at-once system of equations solved at each Newton iterations is discussed in section 3.  Then,  we present our new parallel-in-time method (ParaDIn) and study its properties in sections 4-7.  Numerical results demonstrating the parallel efficiency and scalability of the proposed method are presented in section 8.  Finally, conclusions are drawn in section 9.

\section{Governing equations and the sequential BDF1 scheme}
%===============================================
\label{GE}

We consider a 2-D scalar nonlinear conservation law equation of the following form:
\begin{equation}
\label{eq:CLE}
\frac{\partial u}{\partial t} + \frac{\partial f}{\partial x} + \frac{\partial g}{\partial y} = 
\frac{\partial}{\partial x}\left( \mu\frac{\partial u}{\partial x}\right) + \frac{\partial}{\partial y}\left( \mu\frac{\partial u}{\partial y}\right),  
\ \forall (x, y) \in \Omega, \ t \in (0, T_f],
\end{equation}
where $f(u)$ are $g(u)$ are inviscid fluxes associated with $x$ and $y$ coordinates,  respectively,  $\mu(u)$ is a nonlinear viscosity coefficient such that $\mu \ge 0$ $\forall u$,  and $\Omega = \left\{(x,y) | \ x_L < x < x_R, y_L < y <y_R \right\}$.  

The following two model problems are considered in the present study: nonlinear heat and Burgers equations. For the nonlinear heat equation, $f = 0$,  $g = 0$  $\forall (x,y) \in \Omega,  t\in (0, T_f]$ and $\mu(u) = \mu_0 u^2$, where $\mu_0$ is a positive constant.  In 2-D, the Burgers equations are a system of two coupled equations which are modified such that both velocity components are assumed to be equal each other in the entire space-time domain.  As a result, the system of two equations reduces to Eq. (\ref{eq:CLE}) with the following inviscid fluxes:  $f = g = \frac{u^2}2$.  For the Burgers equation, the viscosity coefficient $\mu=\mu_0$ is assumed to be a positive constant. 
%The source term $S$ is introduced in Eq. (\ref{eq:CLE}) to use the method of manufactured solutions to produce a time-accurate exact solution for comparison. 

The conservation law equation (\ref{eq:CLE}) is subject to the following Dirichlet boundary conditions:
\begin{equation}
\label{eq:BC}
\begin{array}{lc}
u(x_L, y, t) = b(x_L, y, t), \ \ \ & u(x_R, y, t) = b(x_R,  y, t), \\
u(x, y_L, t) = b(x, y_L, t), \ \ \ & u(x, y_R, t) = b(x,  y_R, t),
\end{array}
\end{equation}
and the initial condition: 
$$ u(x, y, 0) = u_0(x, y, 0),$$
where it is assumed that  $b$ and $u_0$ are bounded in $L_2 \cap L_{\infty}$ and contain data so that Eqs. (\ref{eq:CLE}-\ref{eq:BC}) are well posed.

The governing equation (\ref{eq:CLE}) is approximated on a uniform rectangular grid generated in the domain $\Omega$ with $N_x$ and $N_y$ grid intervals along the $x$ and $y$ coordinates,  respectively.  The grid points are enumerated in a conventional way along the $x$-axis starting from the bottom boundary $j=0$ and going up to the top boundary $j=N_y$.  Discretizing the time derivative term by using the method of lines based on the implicit BDF1 scheme and the viscous and inviscid terms by the conventional 2nd-order 3-point discrete Laplace operator and the 2nd-order central finite difference scheme in each spatial direction,  respectively,  we have
\begin{equation}
\label{eq:FDS}
\begin{split}
 \frac{u_{j,i}^n-u_{j,i}^{n-1}}{\tau_n} + &\frac{f_{j,i+1}^n - f_{j,i-1}^n }{2 h_x} + \frac{f_{j+1,i}^n - f_{j-1,i}^n }{2 h_y} \\
                                                              = & \frac{\mu_{j, i+\frac 12} \left(u_{j,i+1}^n - u_{j,i}^n\right) -  \mu_{j, i-\frac 12} \left(u_{j,i}^n - u_{j,i-1}^n\right)}{h^2_x} \\
                                                              + & \frac{\mu_{j+\frac 12,i} \left(u_{j+1,i}^n - u_{j,i}^n\right) -  \mu_{j-\frac 12,i} \left(u_{j,i}^n - u_{j-1,i}^n\right)}{h^2_y},
\end{split}
\end{equation}
where $\tau_n = t^n - t^{n-1}$ is a time step size,  $h_x$ and $h_y$ are grid spacings in $x$ and $y$, respectively.  
%In the finite difference scheme given by Eq. (\ref{eq:FDS}), it is assumed without loss of generality that $f_u > 0$ and $g_u > 0$ in the entire computational space-time domain.  Note that this finite difference scheme can be readily extended to those cases when the inviscid flux Jacobians have arbitrary signs.
Using the Newton method at each time step, the nonlinear discrete equations (\ref{eq:FDS}) are solved sequentially starting from the time level $n=1$ and marching forward in time until $n = N_t$.  Since the solution $u^{n-1}$ at the previous time level is required to advance the solution to the next time level,  this time integration method is inherently sequential in time.

\section{Global-in-time formulation of the BDF1 scheme}
%=========================================
\label{GIT}
In contrast to the conventional time-marching method that solves the nonlinear discrete equations (\ref{eq:FDS}) sequentially in time,  the new time-parallel BDF1 method considers Eq. (\ref{eq:FDS}) as a single global space-time system of equations:
\begin{equation}
\label{eq:GIT}
\frac{{\bf u}^n -{\bf u}^{n-1}}{\tau_n} + {\bf F}\left({\bf u}^n\right) = {\bf q}^n,  \hspace{0.5cm} n = 1, \dots, N_t,
\end{equation}
where the solution vector is given by ${\bf u}^n = \left[u_{1,1}^n,  \dots , u_{1,N_x}^n, \dots, u_{N_y,  N_x}^n \right]^T$, ${\bf F}$ represents the nonlinear discrete spatial operator associated with the inviscid and viscous terms,   and the vector ${\bf q}$ includes the contribution from the initial and boundary conditions and the source term.  Multiplying Eq. (\ref{eq:GIT}) by $\tau_n$ and using the Newton method to solve this global-in-time system of nonlinear equations, we have
\begin{equation}
\label{eq:LocalNewton}
\left[ I + \tau_n \left(\frac{\partial{\bf F}}{\partial {\bf u}}\right)^n_k \right]\Delta{\bf u}^n  - \Delta{\bf u}^{n-1} = 
-\tau_n \left(\frac{{\bf u}^n_k -{\bf u}^{n-1}_k}{\tau_n} + {\bf F}\left({\bf u}^n_k\right) - {\bf q}^n_k\right), 
\end{equation}
where ${\bf u}_{k+1}^n = {\bf u}_k^n + \Delta{\bf u}^n$,  ${\bf u}_k^n$ is the solution vector of length $N_s=N_x N_y$ on $n$-th time level at $k$-th Newton iteration, and $I$ is the $N_s\times N_s$ indentity matrix.  Note that Eq. (5) is solved for all time levels simultaneously,  i.e.,  it is solved for the following global vector ${\bf U} = \left[{\bf u}^1, {\bf u}^2, \dots,  {\bf u}^{N_t}\right]^T$, where ${\bf u}^i \in \mathbb{R}^{N_s} \forall i$. The above system of linear equations at each Newton iteration can be recast in a block--matrix form as follows:
\begin{equation}
\label{eq:GlobalNewton}
\left[
\begin{array}{cccc}
A_1      &   0        & \dots   &       0      \\
-I         & A_2      & \ddots &  \vdots   \\
\vdots  & \ddots  & \ddots &      0       \\
0         & \dots    &     -I     &  A_{N_t} \\
\end{array}
\right]
\left[
\begin{array}{c}
\Delta{\bf u}^1         \\
\Delta{\bf u}^2         \\
\vdots                        \\
\Delta{\bf u}^{N_t}   \\
\end{array}
\right] = 
\left[
\begin{array}{c}
{\bf r}^1         \\
{\bf r}^2         \\
\vdots             \\
{\bf r}^{N_t}   \\
\end{array}
\right],
\end{equation}
where $A_i = I + \tau_i \left(\frac{\partial{\bf F}}{\partial {\bf u}}\right)^i_k$, $i=1,\dots, N_t$ are nonsingular $N_s \times N_s$ matrices and ${\bf r}^n$ is a residual vector associated with the $n$-th time level,  which is equal to the right-hand side of Eq. (\ref{eq:LocalNewton}).  The system of equations (\ref{eq:GlobalNewton}) is coupled in time only due to the subdiagonal of the left-hand-side matrix that couples vectors $\Delta{u}^n$ and $\Delta{u}^{n-1}$ at two neighboring time levels, thus making it impossible to solve each spatial equation simultaneously in time.

It should be noted that  Eq.  (\ref{eq:LocalNewton}) at each Newton iteration can be solved iteratively in parallel by using either the block-Jacobi or  block-Gauss-Seidel methods \cite{SN, Wom}.  Note, however,  that these methods converge very slowly as has been proven in \cite{DMD}, thus dramatically reducing or completely negating the overall parallel speedup of the method.  We propose to overcome this problem by fully decoupling equations (\ref{eq:LocalNewton}) and solving them directly in parallel. This new approach referred to as ParaDIn is presented next.

\section{New parallel-in-time BDF1 scheme}
%================================
\label{PITBDF1}
%Various iterative and direct methods have been developed in the literature \cite{MR} to solve Eq. (\ref{eq:GlobalNewton}) in parallel for all time levels.

To solve Eq. (\ref{eq:GlobalNewton}) in parallel for all time levels,  we propose to fully decouple this block bi-diagonal system of equations such that each block associated with the spatial operator at a given time level can be solved independently of each other. The main idea of the proposed method is based on the observation that the exact inverse of the block bi-diagonal matrix in the left-hand side of Eq.  (\ref{eq:GlobalNewton}) is a lower block-triangular matrix which can be calculated analytically.  Multiplying the right-hand side ${\bf R}=\left[{\bf r}_1, \dots, {\bf r}_{N_t} \right]^T$ by this inverse,  the system of equations Eq.  (\ref{eq:GlobalNewton}) can be fully decoupled.  Indeed,  assuming that $A_i$ is nonsingular for all $i=1, \dots, N_t$ and its inverse is denoted as $A_i^{-1}$,  the solution of Eq.  (\ref{eq:GlobalNewton}) can be written as follows:
\begin{equation}
\label{eq:Inverse}
\left[
\begin{array}{c}
\Delta{\bf u}^1         \\
\Delta{\bf u}^2         \\
\vdots                        \\
\Delta{\bf u}^{N_t}   \\
\end{array}
\right] = 
\left[
\begin{array}{cccc}
A_1^{-1}                                           &   0             & \dots                                                          &       0      \\
 \prod\limits_{i=2}^{1} A_i^{-1}      & A_2^{-1}  & \ddots                                                        &  \vdots   \\
\vdots                                               &\ddots       & \ddots                                                        &      0       \\
 \prod\limits_{i=N_t}^{1} A_i^{-1}  & \dots        &  \prod\limits^{N_t-1}_{i=N_t} A_i^{-1}    &  A_{N_t}^{-1} \\
\end{array}
\right]
\left[
\begin{array}{c}
{\bf r}^1         \\
{\bf r}^2         \\
\vdots             \\
{\bf r}^{N_t}   \\
\end{array}
\right],
\end{equation}
where $ \prod\limits_{i=N_t}^{1} A_i^{-1}  = A_{N_t}^{-1} \dots A_1^{-1}$.
Formally,  the space-time system of linear equations (\ref{eq:Inverse}) is fully decoupled in time and can be integrated in parallel for all time levels. 

\begin{remark}
\label{re:Commute}
Note that the order of matrix multiplication in Eq. (\ref{eq:Inverse}) is important, because the Jacobian matrices $A_i$, $i=1,\dots,N_t$ depend on the corresponding solution vectors ${\bf u}_i$ and do not in general commute with each other,  i.e., 
$$
A_i A_j \ne A_j A_i, \quad \forall i \ne j. 
$$
\end{remark}

Though the system of equations (\ref{eq:Inverse}) is fully decoupled,  the inverse Jacobians $A_i^{-1}$, $i=1,\dots,N_t$ are not readily available for 
discretized nonlinear conservation law equations such as Eq.  (\ref{eq:FDS}).  We propose to overcome this problem by decoupling Eq. (\ref{eq:GlobalNewton}) in an alternative form that does not explicitly require the local inverse Jacobians,  which is presented in the following theorem.
\begin{theorem}
\label{th:Decouple}
If matrices $A_i$, $i=1,\dots,N_t$  in Eq. (\ref{eq:GlobalNewton}) are nonsingular,  i.e., $\det A_i \ne 0, \ \forall i$, then the following system of equations 
\begin{equation}
\label{eq:Dec}
\left\{
\begin{array}{rl}
A_1 \Delta {\bf v}^1        = & {\bf r_1} \\
A_1 A_2\Delta {\bf v}^2 = &A_1 {\bf r}_2 + {\bf r_1} \\
\vdots & \\
% A_1 \dots A_{Nt}\Delta {\bf u}^2 = &A_1\dots A_{N_t-1} {\bf r}_{N_t} + A_1\dots A_{N_t-2} {\bf r}_{N_t-1} + \dots +{\bf r_1} \\
 \prod\limits_{i=1}^{N_t} A_i \Delta{\bf v}^{N_t} = &  \sum\limits_{j=2}^{N_t}  \prod\limits_{i=1}^{j-1} A_i {\bf r}_j  + {\bf r}_1 \\
\end{array}
\right.
\end{equation}
has a unique solution that is identical to that of Eq. (\ref{eq:GlobalNewton}).
\end{theorem}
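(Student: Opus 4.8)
The plan is to show that \eqref{eq:Dec} is nothing more than an invertible, telescoped recombination of the rows of \eqref{eq:GlobalNewton}, so that the two systems must share the same unique solution. First I would settle unique solvability on both sides. The coefficient matrix of \eqref{eq:GlobalNewton} is block lower bidiagonal, so its determinant equals $\prod_{i=1}^{N_t}\det A_i$, which is nonzero by hypothesis; hence \eqref{eq:GlobalNewton} has a unique solution $\{\Delta{\bf u}^n\}$. For \eqref{eq:Dec} I would introduce the partial products $P_n := A_1 A_2\cdots A_n$ (with $P_0 := I$); each $P_n$ is a product of nonsingular matrices and is therefore nonsingular, so the $n$-th equation $P_n\Delta{\bf v}^n = \sum_{j=1}^{n}P_{j-1}{\bf r}^j$ determines $\Delta{\bf v}^n$ uniquely. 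It then suffices to verify that the solution of \eqref{eq:GlobalNewton} satisfies \eqref{eq:Dec}.

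The core step is a telescoping argument. The $n$-th row of \eqref{eq:GlobalNewton} reads $-\Delta{\bf u}^{n-1}+A_n\Delta{\bf u}^n = {\bf r}^n$ (with $\Delta{\bf u}^0:=0$). I would left-multiply the $j$-th row by $P_{j-1}$ and use the factorization $P_{j-1}A_j = P_j$ to obtain $P_j\Delta{\bf u}^j - P_{j-1}\Delta{\bf u}^{j-1} = P_{j-1}{\bf r}^j$. Summing these identities over $j=1,\dots,n$ collapses the left-hand side telescopically to $P_n\Delta{\bf u}^n$, leaving $P_n\Delta{\bf u}^n = \sum_{j=1}^n P_{j-1}{\bf r}^j$ (the $j=1$ term being $P_0{\bf r}^1={\bf r}^1$), which is precisely the $n$-th equation of \eqref{eq:Dec}. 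Since each row was multiplied by an invertible matrix before summation, no solution is created or lost, so the unique solution of \eqref{eq:GlobalNewton} is the unique solution of \eqref{eq:Dec}; that is, $\Delta{\bf v}^n = \Delta{\bf u}^n$ for every $n$.

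The hard part will be the bookkeeping of the product order, exactly the noncommutativity flagged in Remark \ref{re:Commute}. The telescoping closes only because $P_n = A_1\cdots A_n$ is assembled by appending factors on the \emph{right}, so that $P_{j-1}A_j = P_j$; this mirrors the fact that block forward elimination left-multiplies each successive row by the accumulated product of the preceding diagonal blocks in that same order, and it is the reason the products in \eqref{eq:Dec} cannot be reordered. As an independent cross-check I would confirm by induction that the forward-substitution solution $\Delta{\bf u}^n = A_n^{-1}\!\left({\bf r}^n + \Delta{\bf u}^{n-1}\right)$ admits the closed form $\Delta{\bf u}^n = P_n^{-1}\sum_{j=1}^n P_{j-1}{\bf r}^j$, invoking $P_n^{-1} = A_n^{-1}P_{n-1}^{-1}$; this is the same ordering identity read in reverse, and it confirms that the products written in \eqref{eq:Dec} are exactly the ones for which the decoupled system is genuinely equivalent to the original, rather than merely of the same block shape.
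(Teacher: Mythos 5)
Your proof is correct and follows essentially the same route as the paper's: both left-multiply the $j$-th block row of Eq.~(\ref{eq:GlobalNewton}) by the accumulated product $A_1\cdots A_{j-1}$ and eliminate the coupling term $-\Delta{\bf u}^{j-1}$ using the preceding rows, the paper doing this by recursive substitution and you by the equivalent telescoping sum. Your logical framing (uniqueness on both sides plus verification that the solution of Eq.~(\ref{eq:GlobalNewton}) satisfies Eq.~(\ref{eq:Dec})) is a slightly tidier way of stating the paper's ``these operations do not change the solution'' argument, but it is the same proof.
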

\begin{proof}
The existence and uniqueness of the solution of Eq. (\ref{eq:Dec}) follow immediately from the fact that $\det\left( \prod\limits_{i=1}^{j} A_i \right) \ne 0$
for $j=1, \dots, N_t$  which is a direct consequence of $\det A_i \ne 0$ for $i=1, \dots,  N_t$.  

Now,  let us prove that the solutions of Eqs. (\ref{eq:GlobalNewton}) and (\ref{eq:Dec}) are identical each other.  
%We prove it by induction on the equation number.
We begin by noting that the first equations in both systems are identical to each other and fully decoupled form the remaining equations, which implies that $\Delta{\bf v}_1  \equiv \Delta{\bf u}_1$. 
 Moving the $\Delta{\bf u}_1$ in  the 2nd equation of Eq. (\ref{eq:GlobalNewton}) to the right-hand side and multiplying both sides of the equation by the nonsingular matrix $A_1$, we have
$$
A_1 A_2 \Delta{\bf u}_2 = A_1 {\bf r}_2 + A_1 \Delta {\bf u}_1.
$$
Substituting the first equation in Eq. (\ref{eq:GlobalNewton}) into the right-hand side of the above equation yields
\begin{equation}
\label{eq:Eq2}
A_1 A_2 \Delta{\bf u}_2 = A_1 {\bf r}_2 + {\bf r}_1,
\end{equation}
which is identical to the second equation in the system of equations (\ref{eq:Dec}).  Moving the $\Delta{\bf u}_2$ in  the 3rd equation of Eq. (\ref{eq:GlobalNewton}) to the right-hand side,  multiplying by $A_1 A_2$, and substituting  Eq. (\ref{eq:Eq2}), into this equation, we obtain the following equation:
$$
A_1 A_2 A_3\Delta{\bf u}_3 = A_1 A_2{\bf r}_3 + A_1{\bf r}_2 + {\bf r}_1
$$
which is identical to the 3rd equation in Eq. (\ref{eq:Dec}).  Repeating this procedure recursively $N_t-1$ times, we recover all the equations in Eq. (\ref{eq:Dec}).  Since at each step of this procedure, we performed the operations that do not change the solution of the original system of equations (\ref{eq:GlobalNewton}), the solution of Eq. (\ref{eq:Dec}) is identical to the solution of (\ref{eq:GlobalNewton}), i.e.,  $\Delta{\bf v}_i \equiv \Delta{\bf u}_i$ $\forall i = 1,\dots, N_t$.
%\hspace*{\fill}{\qed}
\end{proof}

A key advantage of Eq. (\ref{eq:Dec}) as compared with its counterpart (Eq.(\ref{eq:GlobalNewton})) is that this system of equations is fully decoupled and can be solved in parallel for all $N_t$ time levels at once.  This allows for an efficient parallel implementation of the method of lines based on the BDF1 discretization.  Since the systems of equations (\ref{eq:GlobalNewton}) and (\ref{eq:Dec}) are equivalent to each other and have identical solutions as follows from Theorem \ref{th:Decouple}, the convergence rates of Newton iterations associated with both systems of equations are identical to each other as well.  Therefore,  if the Newton method associated with Eq. (\ref{eq:GlobalNewton}) converges quadratically, then the new parallel-in-time method based on Eq. (\ref{eq:Dec}) preserves this optimal convergence rate.
Note that at each time level,  the matrix on the left-hand side of each equation  in Eq. (\ref{eq:Dec}) has the same size as the system of linear equations at the corresponding time step at each Newton iteration of the sequential BDF1 method, which is solved to advance the solution over one time step. Therefore, the computational cost of solving each local problem in Eq. (\ref{eq:Dec}) is nearly identical to that of the sequential BDF1 scheme at each Newton iteration at the corresponding time step,  if the same direct solver is used for solving both systems of linear equations. Another advantage of the proposed parallel-in-time integration scheme is that it preserves the original spatial discretization and can be directly combined with standard spatial domain-decomposition algorithms, thus promising a much higher speedup on large computer platforms as compared with the current state-of-the-art methods based on parallelization of the spatial discretization alone. Furthermore, the new BDF1 scheme given by Eq. (\ref{eq:Dec}) provides the highest level of parallelism in time because each time level can be computed on its own computing core in parallel. 

\begin{remark}
Along with the 2nd-order linear finite difference operators used to discretize the spatial derivatives in the present study,  the proposed parallel-in-time BDF1 scheme can be straightforwardly extended to other spatial discretizations (finite element,  finite volume,  spectral collocation methods, etc.) without any modifications.
\end{remark}

The Newton method for the all-at-once space-time discrete equations (\ref{eq:Dec}) requires an initial guess for the entire vector  ${\bf U} = \left[{\bf u}^1, {\bf u}^2, \dots, {\bf u}^{N_t }\right]^T$, where ${\bf u}^{i}\in \mathbb{R}^{N_s}$ $\forall i$.  Since the Newton method is sensitive to the initial guess, in the present analysis, we initialize the solution by solving Eq. (\ref{eq:FDS}) sequentially on a coarse grid and interpolating this coarse-grid-solution  to the original fine mesh by using a 1-D cubic spline along each spatial coordinate.  Note that the original grid is coarsened in both each spatial direction and time by the same factor $c_f$.  
% For all test problems presented in this paper, we set the coarsening factor equal to $c_f=4$. 
Though this strategy effectively imposes an upper bound on the maximum possible speedup that can be obtained with the ParaDIn method,  the bound is not too restrictive.  Indeed, the upper bound is $O(c_f^p )$ with $p \ge 3$ in 2-D and $p \ge 4$ in 3D,  because for realistic problems, the computational cost associated with the solution of the large system of linear equations at each time level is practically always superlinear in $N_s$,  where $N_s$ is the total number of grid points of the spatial grid.  For example for $c_f=4$, this bound is of the order of $O(c_f^3 )=O(64)$, which is well above than most, if not all, parallel speedups in time which have been reported in the literature for 2-D nonlinear partial differential equations.

\section{Parallel algorithm for calculating  left- and right-hand sides of Eq. (\ref{eq:Dec})}
%=================================================================
\label{RLHS}
To construct an efficient parallel-in-time method, the computational cost of calculating the left- and right-hand sides of each equation in Eq. (\ref{eq:Dec}) must be significantly lower than that of solving the corresponding system of linear equations at each time level. 
Note that the left- and right-hand sides of Eq. (\ref{eq:Dec}) have the following recursive property:
$$
\begin{array}{l}
P^{n+1} = P^{n} A_{n+1} \\
\tilde{\bf r}^{n+1} = P^{n}{\bf  r}^{n+1} + \tilde{\bf r}^n,
\end{array}
\quad {\rm for} \ n = 1,\dots N_t
$$
where 
%$A_n = I + \tau_n \left( \frac{\partial {\bf F}}{\partial {\bf u}}\right)^n$,  
$P^1 = A_1$, and $\tilde{\bf r}^1 = {\bf r}^1$.
Using this recursive property, we propose the following algorithm for computing all $N_t$ left- and right-hand sides in Eq. (\ref{eq:Dec}) which can be performed in parallel.  First,  the solution vector ${\bf u}_1$ is sent to all computing cores. Then, we partition the matrix $A_1$ into $N_t$ rectangular matrices,  such that each submatrix containing $m=N_s/N_t$ rows of $A_1$ is formed on the corresponding computing core.  In other words,  the rows from $1$ to $m$ are computed on a core $C_1$,  rows from $m+1$ to $2m$ are computed on a core $C_2$ and so on up to the last computing core $C_{N_t}$.  As a result,  each core has the corresponding batch of rows of the original matrix $A_1$.  To take into account the sparsity of the Jacobian matrices $A_1, \dots,  A_{N_t}$ and reduce the computational cost,  we first multiply nonzero entries in the rows of the $A_1$ matrix by the corresponding nonzero entries in each column of $A_2$.  After this multiplication, each core has the corresponding $m$ rows of the product matrix $A_1 A_2$,  which are then sent to the core $C_2$ to form the entire product matrix $A_1 A_2$.  Similarly,  we can calculate the product of $A_1$ and $m$ rows of the vector ${\bf r}_2$ on each core.  Repeating this multiplication procedure recursively $l$ times,  the corresponding rows of the product matrix $P^l = \prod\limits_{i=1}^l A_i $ and the right-hand-side 
$\tilde{\bf r}^l$ of Eq. (\ref{eq:Dec}) on each computing core are calculated as follows:
\begin{equation}
\label{eq:Prod}
\begin{split}
p^{l}_{ij} &= \sum\limits_{k\in K_{ls}} p^{l-1}_{ik} a^{l}_{kj},                            \quad l=2,\dots, N_t, \ i = (s-1) m + 1, \dots, s m \\
\tilde{r}^{l}_{i}  &= \sum\limits_{k\in K_{l}} p^{l-1}_{ik} r^{l}_{k} + \tilde{r}^{l-1}_{i},  \\
\end{split}
\end{equation}
where $\tilde{r}^{l}_{i}$ and $r^{l}_{k}$ are entries of the right-hand sides in the $l$-th equation in Eqs. (\ref{eq:Dec}) and (\ref{eq:GlobalNewton}), respectively,  and $K_{ls}$ is a set of indices such that both the entries of the product  matrix $P^{l-1}$ and the entries of the Jacobian matrix $A_l$ are nonzero.  
% Denoting computing cores $C_1, \dots C_{N_t}$,  
The above algorithm can be  summarized as follows.
\newline\newline
\noindent{\bf Algorithm 1}

\begin{tabular}{l}
For $l=1,\dots, N_t$  do\\
1) Send ${\bf u}^l$  and calculate rows $\left\{a^l_{ij}\right\}_{i={(s-1)}m+1}^{s m}$ on $C_s$ for $s=1,\dots, N_t$ \\
2) Compute $\left\{p^l_{ij}\right\}_{i={(s-1)}m+1}^{s m}$ and $\left\{\tilde{r}^l_{i}\right\}_{i={(s-1)}m+1}^{s m}$  by using Eq. (\ref{eq:Prod})\\
in parallel on $C_s$ for $s=1,\dots, N_t$  \\
3) Send $\left\{p^l_{ij}\right\}_{i={(s-1)}m+1}^{s m}$ to $C_s$ for all $l \ne s$
\end{tabular}
\newline

For the numerical scheme (Eq.~(\ref{eq:FDS})) considered in the present study, the number of nonzero entries  in each row/column of $A_1, \dots, A_{N_t}$ is less than or equal to $d=5$. 
Therefore,  let us show that the computational cost of the above parallel algorithm for calculating the left- and right-hand sides of all equations in the system (\ref{eq:Dec}) is $O(N_t^2 N_s)$, i.e., it is just linear in $N_s$ if $N_t \ll N_s^{1/2}$.  We begin by proving the following lemma.
\begin{lemma}
\label{lm}
The product of two $N_s \times N_s$ matrices $X$ and $Y$ where the latter is a sparse matrix whose each column contains no more than $d$ nonzero elements can be performed in $2  nnz(Z) d$ operations, where $nnz(Z)$ is the number of all nonzero entries in $Z=X Y$.
\end{lemma}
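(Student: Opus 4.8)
The plan is to evaluate the product $Z = XY$ one column at a time and to charge every arithmetic operation to a nonzero entry of the output matrix $Z$. I would start from the definition of the matrix product,
\[
z_{ij} = \sum_{k=1}^{N_s} x_{ik}\, y_{kj},
\]
and immediately exploit the column sparsity of $Y$. Writing $S_j = \{\, k : y_{kj} \neq 0 \,\}$ for the support of the $j$-th column of $Y$, every term with $k \notin S_j$ vanishes, so the inner sum collapses to $z_{ij} = \sum_{k \in S_j} x_{ik}\, y_{kj}$, where $|S_j| \le d$ by hypothesis. This is precisely the situation in the algorithm, with $X = P^{l-1}$ the dense accumulated product and $Y = A_l$ the sparse Jacobian whose columns carry at most $d$ nonzeros.

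Next I would bound the cost of a single nonzero entry. Since only the indices $k \in S_j$ contribute, forming $z_{ij}$ takes at most $|S_j| \le d$ multiplications and at most $|S_j| - 1 \le d - 1$ additions, i.e. no more than $2d - 1 < 2d$ floating-point operations when each multiply and each add is counted once. An entry $z_{ij}$ can be nonzero only if at least one product $x_{ik}\, y_{kj}$ with $k \in S_j$ is nonzero, so the nonzero pattern of $Z$ is fixed by those of $X$ and $Y$, and entries outside this pattern are never accumulated. Summing the per-entry estimate over the nonzero entries of $Z$ alone then yields a total of at most
\[
\sum_{(i,j):\, z_{ij} \neq 0} (2d - 1) \;<\; 2d \cdot nnz(Z)
\]
operations, which is exactly the claimed bound.

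The step requiring the most care is the charging argument, namely the justification that the work may be accounted per nonzero output entry rather than over all $N_s^2$ candidate entries or per input nonzero. This relies on organizing the computation so that only entries lying in the (structurally) nonzero pattern of $Z$ are ever touched, so that structural zeros incur no cost. Accordingly I would read $nnz(Z)$ as the number of structurally nonzero entries, that is, those not forced to vanish by the sparsity of $X$ and $Y$, so that accidental numerical cancellation cannot undercut the bound. With this convention, the estimate $2d \cdot nnz(Z)$ follows immediately from the two elementary counts above.
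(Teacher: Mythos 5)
Your proof is correct and follows essentially the same route as the paper's: both compute each nonzero $z_{ij}$ as a dot product whose terms are restricted by the column sparsity of $Y$ to at most $d$ nonzero summands, charge roughly $2d$ operations per entry, and sum over the $nnz(Z)$ nonzero entries. Your additional remark that $nnz(Z)$ should be read structurally (so that numerical cancellation cannot undercut the charging argument) is a small precision the paper leaves implicit, but it does not change the argument.
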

\begin{proof}
Let $z_{ij}$ be a nonzero entry of $Z$, which is computed as a dot product of the $i$-th row of $X$ and $j$-th column of $Y$, i.e.,
$$
z_{ij} = \sum\limits_{k=1}^{N_s} x_{ik} y_{kj} .
$$ 
Since in each column of $Y$ there are only $d$ or less nonzero entries, the above sum contains no more than $d$ nonzero terms, which can be computed in $2d$ operations  assuming that the addition and multiplication are counted as identical operations. Taking into account that the same number of operations is needed for each of $nnz(Z)$ entries in $Z$, the total number of operations is $2 nnz(Z) d$.
%\hspace*{\fill}{\qed}
\end{proof}

With this lemma, we can now prove the following theorem that provides an estimate of the total computational cost required for computing all product matrices in Eq. (\ref{eq:Dec}).
\begin{theorem}
\label{th:Prod}
The total number of operations for computing all product matrices $P^l=\prod\limits_{i=1}^l A_i, l=1, \dots, N_t$ by using Algorithm 1 is $O(M^2 N_s)$, where $M=\min\left(N_t, N_x\right)$ and $A_1, \dots, A_{N_t}$ are $N_s\times N_s$ Jacobian matrices of the numerical scheme given by Eq. (\ref{eq:LocalNewton}).
\end{theorem}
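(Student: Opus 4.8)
The plan is to bound the arithmetic incurred when the products $P^l=\prod_{i=1}^{l}A_i$ are built incrementally through the recursion $P^{l}=P^{l-1}A_{l}$ (with $P^1=A_1$) that underlies Algorithm~1. Each such step multiplies the progressively denser factor $P^{l-1}$ by the sparse factor $A_l$, whose columns carry at most $d=5$ nonzeros, so Lemma~\ref{lm} gives its cost as $2d\,nnz(P^l)$ operations. Summing over the $N_t-1$ steps, the work to form every product is $\sum_{l=2}^{N_t}2d\,nnz(P^l)$; since $d$ is a fixed constant and Algorithm~1 distributes the rows of each $P^l$ evenly over the $N_t$ cores ($m=N_s/N_t$ rows per core), the operation count on a single core is $O\!\big(\frac{1}{N_t}\sum_{l=2}^{N_t}nnz(P^l)\big)$. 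Everything therefore reduces to controlling the fill-in $nnz(P^l)$, i.e. the number $r(l)$ of nonzeros per row of $P^l$.

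The key step is to estimate $r(l)$ from the stencil geometry. Reading the sparsity pattern as a graph on the $N_x\times N_y$ grid, each $A_i$ inherits the five-point pattern, so $(A_i)_{pq}\neq0$ only when the grid nodes $p$ and $q$ lie within $\ell_1$-distance one of each other. Consequently $(P^l)_{pq}=\big(A_1\cdots A_l\big)_{pq}$ can be nonzero only when $p$ and $q$ are joined by a walk of length at most $l$, that is, when their $\ell_1$-distance is at most $l$. Hence $r(l)$ is bounded by the number of grid nodes lying in a lattice diamond of radius $l$ truncated to the grid, which grows quadratically, $r(l)=O(l^2)$, until the support reaches the boundary near $l\sim N_x$ and then saturates at $N_s$. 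I would record this as $r(l)=O\!\big(\min(l,N_x)^2\big)$, so that $nnz(P^l)=O\!\big(N_s\min(l,N_x)^2\big)$.

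It then remains to sum and split into the two regimes separated by $M=\min(N_t,N_x)$. Substituting the bound gives aggregate work $O\!\big(N_s\sum_{l=2}^{N_t}\min(l,N_x)^2\big)$; when $N_t\le N_x$ the sum is $\sum_{l=2}^{N_t}l^2=O(N_t^3)$, and when $N_t>N_x$ it is $\sum_{l=2}^{N_x}l^2+(N_t-N_x)N_x^2=O(N_tN_x^2)$, so in both cases $\sum_{l=2}^{N_t}\min(l,N_x)^2=O(N_tM^2)$. The aggregate work is thus $O(N_sN_tM^2)$, and dividing by the $N_t$ cores over which Algorithm~1 spreads the rows yields the claimed $O(M^2N_s)$ operations on each core.

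The hard part will be the fill-in estimate $r(l)=O(\min(l,N_x)^2)$: it is exactly what converts the naive count (which would leave an extra factor of $M$) into the sharp result, and it is the only place where the argument must invoke both the five-point structure of the spatial operator and the finiteness of the grid. I would prove it carefully by the $\ell_1$-ball counting above, checking that the identity shift $I$ in $A_i$ and the modified Dirichlet boundary rows do not enlarge the stencil support, and that the saturation near $l\sim N_x$ is captured by the truncation of the diamond. One caveat worth stating explicitly is that collapsing the two grid directions into the single length $N_x$ presumes a quasi-uniform grid with $N_x\approx N_y\approx\sqrt{N_s}$; for strongly anisotropic grids the same argument carries through with $N_x$ replaced by $\sqrt{N_s}$ and $M=\min(N_t,\sqrt{N_s})$.
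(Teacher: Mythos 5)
Your proof is correct and follows essentially the same route as the paper's: build the products incrementally via $P^{l}=P^{l-1}A_{l}$, invoke Lemma~\ref{lm} with $d=5$, bound the fill-in of $P^{l}$ quadratically, and divide the work over the $N_t$ cores holding $N_s/N_t$ rows each. The only difference is one of rigor rather than of route: where the paper asserts the quadratic growth of the number of nonzero diagonals ``by direct multiplication'' (consistent with its exact count $2l^2+2l+1$) and then treats $N_t<N_x$ and $N_t\ge N_x$ as two separate cases (using a full matrix as the worst case in the latter), you obtain the same fill-in bound from the $\ell_1$-ball/walk-counting argument --- which is precisely the geometric content of the diagonal count --- and handle both regimes uniformly through $\min(l,N_x)^2$.
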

\begin{proof}
First, let us show that if $N_t < N_x =O(N_s^{1/2})$, i.e., $M=N_t$, the total number of operations of Algorithm 1 is $O(N_t^2 N_s)$.  Indeed,  for the central  scheme given by Eq. (\ref{eq:FDS}), each Jacobian matrix $A_i$ contains 5 nonzero diagonals.  Furthermore, all Jacobian matrices have the same pattern of nonzero entries, i.e.,  if $a^k_{ij}\ne 0$, then $a^l_{ij}\ne 0$ and vice versa if $a^k_{ij}=0$, then $a^l_{ij}=0 \ \forall k, l=1,\dots,N_t$.   If $N_t < N_x$,  it can be readily shown by the direct multiplication that the number of diagonals in  $P^{l} =\prod\limits_{i=1}^{l} A_i$ grows as an arithmetic sequence, where the number of elements in the sequence is $l$ and its common difference is $O(5)$.
% Indeed,  the diagonals of $P^1$ are distributes as follows: 1, 3, 1, where the largest number is associated with the main diagonal 
%and its nearest sub- and superdiagonals, while the first and last numbers are associated with the sub- and superdiagonals 
%that are offset by $N_s^{1/2}$ from the main diagonal. For $P^2$, the diagonal distribution is $1, 3, 5,  3, 1$,  ns so on. 
Therefore,  the total number of nonzero diagonals in the product matrix $P^{N_t}$ is equal to the sum of this arithmetic sequence which is $O(N_t^2)$, where $N_t$ is the total number of elements in the sequence.  Using Lemma \ref{lm} and taking into account that only $N_s/N_t$ rows of $P^l$ are computed on each core, the number of operations performed by each core for calculating these rows of $P^l$ is of the order of $O(N_t^2 \frac{N_s}{N_t})$.  Since this multiplication procedure is repeated $N_t$ times, the total computational cost for calculating all product matrices $P^1, \dots, P^{N_t}$ is $N_t O(N_t^2 \frac{N_s}{N_t}) = O(N^2_t N_s)$.

For $N_t \ge N_x=O(N_s^{1/2})$,  i.e., $M=O(N_s^{1/2})$,  the worst case scenario is when the number of matrices  $l$ in the product $P^l=\prod\limits_{i=1}^l A_i$ is so high that $P^l$ is a full matrix.  Therefore, according to Lemma \ref{lm}, the multiplication of $N_s/N_t$ rows of the full matrix $P^l$ by $A_{l+1}$ that contains at most $5$ nonzero entries in each column requires $O(N_s \frac{N_s}{N_t})$ operations. Again, repeating this multiplication recursively $N_t$ times, the total number of operations required for computing $P^1, \dots, P^{N_t}$ is $N_t O(N_s \frac{N_s}{N_t}) = O(N_s^2)$.
%\hspace*{\fill}{\qed}
\end{proof}

\begin{remark}
The number of operations needed for computing all  right-hand sides ${\bf r}_1, \dots {\bf r}_{N_t}$ is also $O(M^2 N_s)$, where $M=\min\left(N_t, N_x\right)$.  Indeed,  for $N_t < N_x=O(N_s^{1/2})$, each row of the product matrix $P^{n}$ contains at most $O(n^2)$ nonzero entries (see Theorem \ref{th:Prod}).  Therefore, the dot product of the nonzero entries in $N_s/N_t$ rows of $P^{n}$ by the vector ${\bf r}_n$ can be performed in $O(n^2 \frac{N_s}{N_t})$ operations.  Repeating this procedure recursively $N_t$ times, the total number of operations for computing ${\bf r}_1, \dots {\bf r}_{N_t}$ is $O(N^2_t N_s$). The case $N_t \ge N_s^{1/2}$ can be proven similarly.
\end{remark}
\begin{remark}
For $N_t < N_x=N_y$, the exact number of nonzero diagonals in a product matrix $P^n=\prod\limits_{i=1}^n A_i$ for the 5-point central scheme given by Eq.  (\ref{eq:FDS}) is equal to $2n^2 + 2n +1$.
% and $\frac 92 n^2 + \frac 32 n +1$. 
%In the present study, it is assumed that $N_t < N_s^{1/2}$, which implies that the total number of operations required for calculating the right- and left-hand sides of all $N_t$ equations in Eq.  (\ref{eq:Dec}) is linear in $N_s$ as follows form Theorem \ref{th:Prod}.
\end{remark}

%Computing each right-hand sides of Eq.(\ref{eq:Dec}) can be done simultaneously with forming the corresponding left-hand side of the same equation.

\section{Computational cost and speedup}
%==============================
Let us evaluate a speedup that can be  obtained with the new ParaDIn method for the implicit BDF1 scheme outlined in sections \ref{PITBDF1} and \ref{RLHS}.  For the serial sequential algorithm, the overall computational cost can be evaluated as follows:
$$
W_{\rm seq} = k_{\rm seq}  N_t  \left[O(N_s) + W_{\rm sol}\right] 
$$
where $W_{\rm sol}$ is the computational cost of solving a system of $N_s$ linear equations at each Newton iteration which strongly depends on a particular solver used, the first term in the square brackets represents the number of operations required to form the Jacobian matrix at each Newton iteration at each time step,  and $k_{\rm seq}$ is an average number of Newton iterations per time step of the sequential algorithm.  Note that the number of operations for computing each Jacobian matrix is of the order of $O(N_s)$,  because this Jacobian matrix contains only 5 nonzero diagonals for the central scheme given by Eq. (\ref{eq:FDS}).  

For the proposed  ParaDIn method,  each time level is computed on its own computing core in parallel. Therefore, the computational time of the parallel-in-time BDF1 method is equal to the sum of the time required to compute a single time step on all computing cores and the communication time between the cores. The computation of each time step on an individual core includes, the cost of forming the corresponding flux Jacobian matrix $A_i$ and computing the left- and right-hand sides in Eq. (\ref{eq:Dec}),  the cost of solving the system of $N_s$ linear equations at each time level,  and the communication time between the cores.  Note that these computational costs should be multiplied by the number of Newton iterations required to drive the residual below a user-specified tolerance.   
In the foregoing section, it has been proven that all $N_t$ left- and right-hand sides in Eq. (\ref{eq:Dec}) can be evaluated in parallel in $O(N_t^2 N_s)$ operations if $N_t < O(N_s^{1/2})$.  The computational cost of solving the system of $N_s$ linear equations at each time level in  Eq. (\ref{eq:Dec}) is assumed to be identical to that of the sequential algorithm $W_{\rm sol}$, which strongly depends on a linear solver used.  In the present analysis, we use the same direct solver for banded matrices for both the parallel-in-time and sequential BDF1 methods to eliminate its effect on the comparison of the ParaDIn and sequential methods.  As we discuss in section \ref{PITBDF1},  the overall computational time also includes the cost of computing an initial guess for all time levels.   For 2-D problems considered in the present paper, this computational cost can be estimated as follows: $ k_{\rm seq} W_{\rm sol}/c_f^p$, where $c_f$ is a space-time coarsening factor and $p$ is a constant such that $p\ge 3$ and $p \ge4$ in two and three spatial dimensions, respectively.  Combining these computational costs together, the total number of operations needed to solve the nonlinear discrete equations (\ref{eq:GIT}) in parallel is given by
\begin{equation}
W_{\rm par} = k_{\rm seq}N_t\frac{W_{\rm sol}}{c_f^p} + k_{\rm par} \left[ O(N_s) +  O(N_t^2 N_s) + W_{\rm sol}  + W_{\rm com}\right],
\end{equation}
where $W_{\rm com}$ is the communication time between computing cores and $k_{\rm par}$ is the number of Newton iterations of the parallel-in-time method.
As a result, the speedup that can be obtained using the ParaDIn--BDF1 scheme as compared with its sequential counterpart is given by
\begin{equation}
S = \frac{W_{\rm seq}}{W_{\rm par}} = \frac{k_{\rm seq}  N_t  \left[O(N_s) + W_{\rm sol}\right] }{k_{\rm seq} N_t \frac{W_{\rm sol}}{c_f^p} + k_{\rm par} \left[ O(N_s) +  O(N_t^2 N_s) + W_{\rm sol}  + W_{\rm com}\right]}
\end{equation}
For realistic problems described by highly nonlinear partial differential equations (e.g., the Navier-Stokes equations),  it is safe to assume that the computational cost of solving the large system of linear equations at each Newton iteration is dominant, %superlinear in the number of degrees of freedom 
so that $W_{\rm sol} \gg O(N^2_t N_s) \gg O(N_s)$ and significantly higher than the communication cost, i.e., $W_{\rm sol} \gg W_{\rm com}$, if $N_t$ is not too large,  e.g., $N_t \ll O(N_s^{1/2})$.  As follows from Theorem \ref{th:Prod}, the solution of Eq. (\ref{eq:Dec}) is identical to that of the original equations (\ref{eq:GlobalNewton}), thus indicating that the Newton iterations of the ParaDIn method converge at the same rate as those of the original sequential algorithm, i.e., $k_{\rm par} \approx k_{\rm seq}$.
Therefore,  a speedup provided by ParaDIn method can be estimated as follows:
\begin{equation}
\label{eq:speedup}
S = \frac{N_t}{\frac {N_t}{c_f^p}  + 1}.
\end{equation}
For $N_t \ll c_f^p$, we can conclude from Eq. (\ref{eq:speedup}) that a nearly ideal speedup of $N_t$ can be achieved with the proposed ParaDIn method.
%
%For the finite difference schemes presented in Section \ref{GE},  each matrix $A_i$ is a banded matrix with the bandwidth of $O(N_x)=O(N_s^{1/2})$.  Taking into account the fact that the bandwidth of the product of two banded matrices is equal to the sum of the corresponding bandwidths of these matrices,  the bandwidth of $\prod\limits_{i=1}^{N_t} A_i$ is $O(N_t N_s^{1/2})$. Since the number of operations required to solve a banded matrix with the bandwidth of $B$ by using a direct solver is of the order of $O(B^2 N_s)$,  the system of $N_s$ linear equations associated with each time level in Eq. (\ref{eq:Dec}) can be solved by the direct linear solver in $O\left( (N_t N_s^{1/2})^2 N_s\right) = O(N_t^2 N_s^2)$ operations. 

%As has been shown in the foregoing section,  all $N_t$ left- and right-hand sides in Eq. (\ref{eq:Dec}) can be evaluated in parallel in $O(N_t^2 N_s)$ operations if $N_t < N_s^{1/2}$.  The computational cost of solving the system of $N_s$ linear equations at each time level in  Eq. (\ref{eq:Dec}) strongly depends on a linear solver used.  In the present analysis, we use a direct solver for banded matrices to eliminate the effect of the solver on the parallel efficiency of the proposed method. 

\section{Influence of the condition number }
%===============================
\label{cond}
As follows from Theorem \ref{th:Prod}, the following constraint $N_t \ll O(N_s^{1/2})$ should be imposed on the total number of time steps to make the computational cost of forming the left- and righ-hand sides of Eq. (\ref{eq:Dec}) linear in the number of degrees of freedom used for discretizing the spatial derivatives.  It should be noted,  however, that this is not the only constraint that should be imposed on the total number of time steps $N_t$ which is equal to the total number of computing cores used for the parallel-in-time discretization of the time derivative.  Indeed, the left-hand side of Eq. (\ref{eq:Dec}) is the product of $n$ Jacobian matrices $\prod\limits_{i=1}^n A_i$ whose condition number may become very large as $n$ increases.  As a result, even a direct linear solver may introduce an O(1) error in the solution of the linear system of equations: $\prod\limits_{i=1}^n A_i \Delta {\bf u}^n = \tilde{\bf r}^n$, if its condition number is comparable or larger than $O(\epsilon^{-1})$, where $\epsilon$ is a roundoff error that accumulates during solving this system of linear equations.  In the present analysis,   a direct solver for banded matrices is used to solve the system of linear equations at each time level.  Therefore, the total roundoff  error introduced by the solver can be estimated as follows: $\epsilon=\epsilon_{\rm rof} O(N_s^2)$, where $\epsilon_{\rm rof}$ is the machine roundoff error which is usually $O(10^{-16})$ and the second term represents the number of operations performed to solve this system of linear equations.

Let us now evaluate the maximum number of time steps that can be used in Eq. (\ref{eq:Dec}) for the 2-D linear heat equation until the condition number becomes comparable with  $\epsilon^{-1}$.  It is well known that the Jacobian of the 5-point Laplacian operator is a symmetric positive definite matrix whose eigenvalues are given by
\begin{equation}
\lambda_{k_x k_y} = \frac{4}{h_x^2}\sin^2{\frac{\pi k_x h_x}2} + \frac{4}{h_y^2}\sin^2{\frac{\pi k_y h_y}2},
\end{equation}
where $k_x=1,\dots,N_x-1$ and $k_y=1,\dots,N_y-1$.  The eigenvalues of $A_i =  I + \tau_i \mu \Delta_h$, where $\Delta_h$ is the Jacobian matrix of the discrete 5-point Laplace operator, can be expressed as follows: $1 + \tau \mu \lambda_{k_x k_y}$.  Assuming that $h_x = \frac1{N_x}=h_y=\frac1{N_y}=h$,  $\tau_1 =\dots =\tau_{N_t} =  \tau=\frac{1}{N_t}$ and all matrices $A_1,\dots,A_{N_t}$ are identical to each other,  the condition number of $\prod\limits_{i=1}^{N_t} A_i $ can be estimated as:
$$
\mathcal{K} = \left( \frac{\max\limits_{k_x, k_y}\lambda_{k_x k_y} }{\min\limits_{k_x, k_y}\lambda_{k_x k_y}}\right)^{N_t} = \left(\frac{1+\frac{8\mu\tau}{h^2}}{1+ 2\pi^2 \mu\tau}\right)^{N_t}.
$$
Setting the condition number $\mathcal{K}$ equal to the reciprocal of the total roundoff error $\epsilon^{-1}$, we have the following inequality for $N_t$:
\begin{equation}
\label{cond}
\left(\frac{1+\frac{8\mu}{N_t h^2}}{1+ \frac{2\pi^2 \mu}{N_t}}\right)^{N_t} <  \frac{c h^4}{\epsilon_{\rm rof}} ,
\end{equation}
where $c$ is a constant independent of $h$.
As follows from the above estimate, the maximum number of time steps that can be integrated by the proposed parallel-in-time method without significant accumulation of the roundoff error depends on the spatial grid spacing $h$ and the physical viscosity $\mu$.  For example for $N_x=N_y=64,  \mu=10^{-3},  c=1$, the maximum number of time steps $N_t$ given by Eq. (\ref{cond}) is equal to 22.  Note that $N_t$ given my Eq.(\ref{cond}) increases  as the viscosity coefficient $\mu$ decreases. Our numerical results presented in the next section qualitatively corroborate this estimate.  
To partially circumvent the influence of the condition number,  a simple diagonal preconditioner,  $\mathscr{P} = {\rm diag}[p^n_{11}, \dots,  p^n_{N_s N_s}]$, where $p^n_{ii}$ are diagonal entries of the product matrix $P^n$, is used in the present analysis.   Constructing more efficient preconditioners for the ParaDIn method is the subject of ongoing research, which will be presented in our future paper.

\section{Numerical results}
%===================
\label{results}
We test the proposed ParaDIn-BDF1 scheme on standard benchmark problems including the 2-D nonlinear heat and Burgers equations with smooth and discontinuous solutions. For all numerical experiments presented herein, we set $N_x=N_y$ and use uniform grids in time,  i.e., $\tau_1 = \tau_2 = \dots = \tau_{N_t }$. The accuracy of the ParaDIn method is verified against the corresponding sequential scheme and exact solutions.  The system of linear equations at each time level is solved by using a standard direct solver for banded matrices without pivoting.  To make a fair comparison between the sequential and parallel-in-time BDF1 schemes, the same direct solver for banded matrices is used for both methods.  {It is beyond the scope of the current study to conduct a detailed comparison of the performance of various solvers for the new parallel-in-time BDF1 scheme.}
The ParaDIn-BDF1 scheme has been parallelized only in time, such that the number of computing cores used is precisely equal to the total number of time steps $N_t$, i.e., each time step is calculated on one core regardless of the number of degrees of freedom used for discretization of the spatial derivatives.
Note that $L_1$,  $L_2$, and $L_{\infty}$ error norms presented in this section are evaluated over the entire space-time domain.  For all test problems presented in this section, we run the ParaDIn and sequential BDF1 schemes with identical input parameters and on the same space-time grids and computing cores.
\begin{table}[!h]
\begin{center}
\begin{tabular}{cccccc}
\hline
                                                &  Sequential BDF1 &   & \vline &  ParaDIn BDF1 &  \\
\hline
 $N_t\times N_x \times N_y$ & $ L_{\infty}$ error & $L_{\infty}$ rate  &\vline& $ L_{\infty}$ error & $L_{\infty}$ rate \\
\hline
%  $8\times 8\times 8     $   &  $3.13\times 10^{-5}$  &   $2.22\times 10^{-4}$ & &  $3.13\times 10^{-5}$   & $2.22\times 10^{-4}$  \\
% $16\times 16\times 16 $   &  $9.72\times 10^{-6}$  &  $1.15\times 10^{-4}$  & &  $9.72\times 10^{-6}$   &  $1.15\times 10^{-4}$  \\
% $24 \times 24\times 24$  &  $4.64\times 10^{-6}$ &  $6.54\times 10^{-5}$  & &  $4.64\times 10^{-6}$   &  $6.54\times 10^{-5}$   \\
% $32\times 32 \times 32$  &  $2.68\times 10^{-6}$  &  $4.01\times 10^{-5}$  & & $2.68\times 10^{-6}$    &   $4.01\times 10^{-5}$ 
   $8\times 8\times 8     $   &  $2.22\times 10^{-4}$  & $ - $      & &  $2.22\times 10^{-4}$   &  $-$       \\
 $16\times 16\times 16 $   &   $1.15\times 10^{-4}$  &  $0.95$  & &  $1.15\times 10^{-4}$   &  $0.95$  \\
 $24 \times 24\times 24$  &  $6.54\times 10^{-5}$ &  $1.4$    & &  $6.54\times 10^{-5}$   &  $1.4$    \\
 $32\times 32 \times 32$  &  $4.01\times 10^{-6}$  &  $1.7$    & & $4.01\times 10^{-5}$    &   $1.7$   \\
\hline
\end{tabular}
\end{center}
\caption{\label{grconv_heat} Error convergence of the ParaDIn and sequential BDF1 schemes for the 2-D nonlinear heat equation.
}
\end{table}
\begin{figure}[!h] %!ht
   \begin{center}
                  \includegraphics[width=0.75\linewidth]{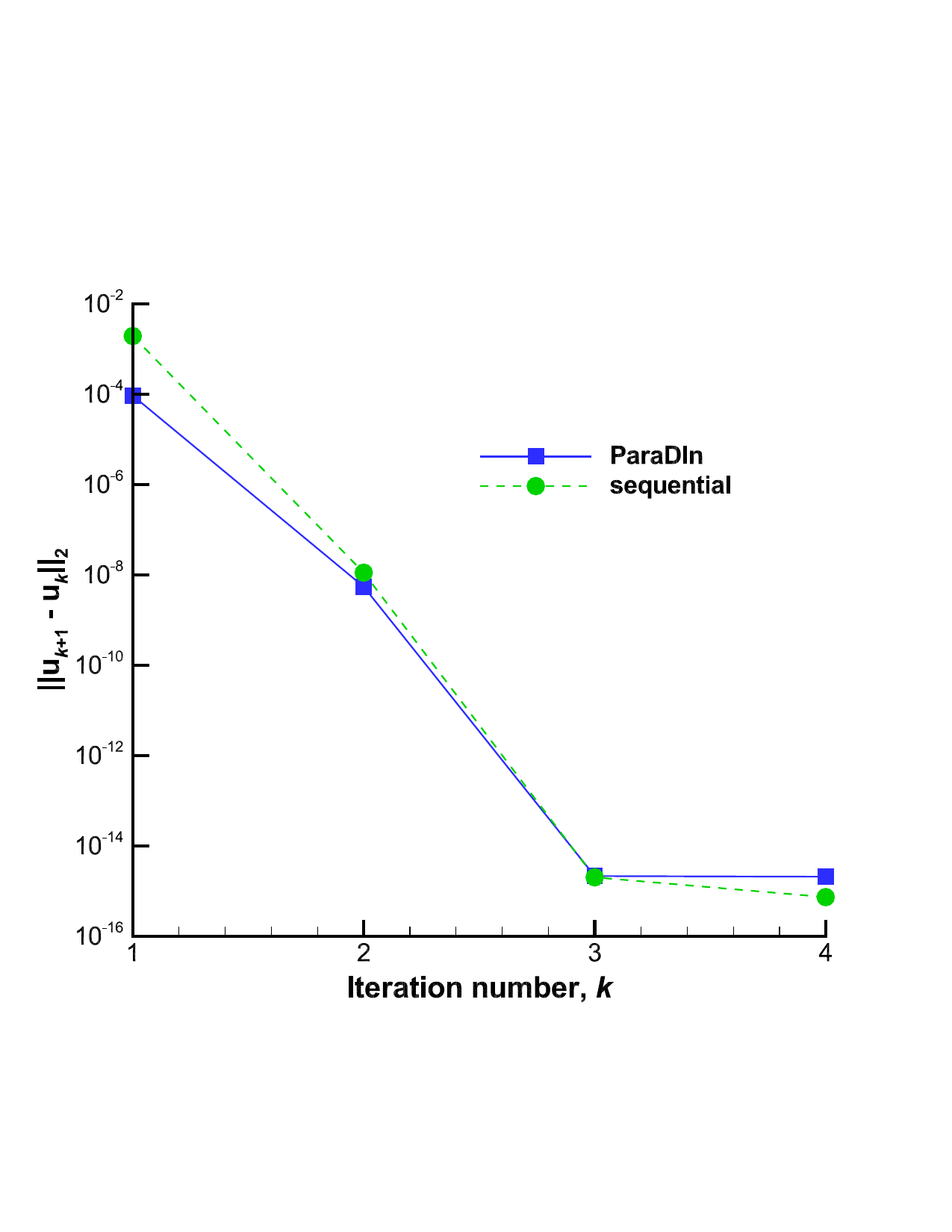} 
  \end{center}
	 \caption{Convergence histories of Newton iterations obtained with the parallel-in-time and sequential BDF1 schemes on the grid with $N_t=8$ and $N_x=N_y=64$ for the 2-D nonlinear heat equation.}  
\label{convhist_heat}
\end{figure}

\subsection{2-D nonlinear heat equation}
\label{nheat}
The first test problem is a nonlinear heat equation (\ref{eq:CLE}) with  $f(u) = g(u)= 0,   \forall (x,y)\in[0.1,1.1]\times[0.1,1.1],  \ t\in[0,1]$ The viscosity coefficient in Eq. (1) is given by $\mu = \mu_0 u^2$, where $\mu_0$ is set equal to $10^{-6}$. This nonlinear heat equation has the following exact solution:
$$
u_{\rm ex}(x,t) = \sqrt{\sqrt{\frac \alpha{\mu_0}}(x + y) + \alpha t  +1},
$$
where $\alpha$ is a positive constant that is set to be 1 for all test cases considered in this section.  The above exact solution is used to define the initial and boundary conditions given by Eq. (\ref{eq:BC}).  An initial guess for the ParaDIn method is the solution of this parabolic equation obtained by the corresponding sequential scheme on a grid coarsened by a factor of $c_f = 4$ both in each spatial direction and time, which is then  interpolated to the original grid.
\begin{table}[!h]
\begin{center}
\begin{tabular}{cccccc}
\hline
Number     &  $N_t$  & Serial           &   Parallel         &  Speedup  &  Parallel     \\
of cores     &               & runtime (s)  &  runtime  (s)   &                   &  efficiency  \\
\hline
  4   &  4  &   1112 .8 &   286.3     &   3.9    &  97\% \\
  8   &  8  &   1489.8 &   194.0      &  7.7     &  96\% \\
 16  &  16 &  2971.9   &   199.2     &   14.9   &  93\% \\
 24 &  24 &  4466.4  &   204.6     &  21.8   &  91\% \\
 32 &  32 &  5969.1  &   209.4     &  28.5    &  89\% \\
%
%  4   &  4  &   290.1     &   76.8     &   3.8    &  95\% \\
%  8   &  8  &   707.0     &   94.5      &   7.5     &  94\% \\
% 16  &  16 &  1621.0   &   122.7     &   13.2   &  83\% \\
% 24 &  24 &  2512.8  &   127.6      &  19.7    &  82\% \\
% 32 &  32 &  3561.2  &   130.1      &  27.4    &  86\% \\
\hline
\end{tabular}
\end{center}
\caption{\label{speedup_heat} The speedup and parallel efficiency of the ParaDIn-BDF1 scheme for the 2-D nonlinear heat equation on grids with  $N_t = 4, 8, 16, 24, 32$,    and $N_x = N_y = 64$.
}
\end{table}

First,  we evaluate the accuracy and convergence properties of the proposed ParaDIn method.  
The $L_{\infty}$ discretization error and its convergence rate computed over the entire space-time domain  with the ParaDIn and sequential schemes on a sequence of globally refined grids are presented in Table \ref{grconv_heat}. Note that for this test problem, the discretization error is dominated by its spatial component, so that the BDF1 scheme demonstrates the convergence rate higher than one.
Convergence histories of the Newton iterations of the ParaDIn and sequential BDF1 schemes obtained on the same grid with $N_t=8$ and $N_x=N_y=64$ are compared in Fig.  \ref{convhist_heat}.  For both the parallel and sequential BDF1 schemes, the $L_2$ residual norm at each Newton iteration is  computed over all time levels as follows:
$$
\| {\bf u}_{k+1} - {\bf u}_k \|_2 =\sqrt{\frac1{N_t N_s}\sum\limits_{n=1}^{N_t}\sum\limits_{i=1}^{N_x}\sum\limits_{j=1}^{N_y}\left[(u^n_{ij})_{k+1} - (u^n_{ij})_{k}\right]^2},
$$
where $k$ is the Newton iteration index.   As evident from this comparison, both methods converge quadratically except for the last 4-th iteration which is affected by the roundoff error.  

To evaluate the parallel performance of the ParaDIn method described in Section \ref{PITBDF1},  we evaluate the weak scaling behavior of the ParaDIn-BDF1 scheme by using one time step per core and increasing the number of cores together with the number of time steps, while the spatial grid remains unchanged.  As a result, the computational cost of one time step stays nearly the same for each core, while the communication time grows, as the number of time steps increases. 
Table \ref{speedup_heat} shows the speedup and parallel efficiency of the ParaDIn-BDF1 scheme for the 2-D nonlinear heat equation on a sequence of space-time grids with $N_t = 8, 16, 24, 32$ and $N_x = N_y = 64$.  For all grids presented in Table \ref{speedup_heat},  Newton iterations of the ParaDIn method  converge quadratically and only 2-3 iterations are needed to drive the $L_2$ norm of the residual below the tolerance that is set to be $1/10$ of the true discretization error on the corresponding grid.  As one can see from this comparison, the proposed ParaDIn-BDF1 scheme provides nearly ideal speedups for all numbers of cores used.  Furthermore, the parallel efficiency  is very close to its optimal value on coarse temporal grids and gradually decreases to $88\%$ as the number of cores increases to 32, which is due to a communication overhead between the computing cores.  
\begin{table}[!h]
\begin{center}
\begin{tabular}{ccccccc}
\hline
%Number &  $N_t$  & Sequential  &Number of   &  ParaDIn         & Number of  & $\| {\bf u}_{\rm par} - {\bf u}_{\rm seq}  \|_2$  &  Speedup     \\
%of cores &               & runtime (s) &iterations    &  runtime (s)     & iterations   &                                                                             &                     \\
Number &  $N_t$  & Sequential  &Number of   &  ParaDIn         & Number of  &  Speedup     \\
of cores &               & runtime (s) &iterations    &  runtime (s)     & iterations   &                      \\
\hline
   4  &  4  &  1103.7    &   3.0  & 286.3   & 3  & 3.9  \\
  8   &  8  &  1475.2   &   2.0  &  194.0   & 2   & 7.6  \\
 16  &  16 &  2936.5  &   2.0  &  199.2  & 2   & 14.7 \\
 24 &  24 &  4414.5  &  2.0   &  204.6  & 2   & 21.6 \\
 32 &  32 &  5900.7  &  2.0  &  209.4  & 2   & 28.2\\
%
%   4   &  4  &  290.1    &   3.0  &  76.8   & 3  & 3.8    \\
%  8   &  8  &  1066.5  &   3.0  &  94.5   & 2   & 11.3    \\
% 16  &  16 &  1619.8  &   2.0  &  122.7  & 2   & 13.2 \\
% 24 &  24 &  2511.3  &  2.0   &  127.6  & 2   & 19.7 \\
% 32 &  32 &  3554.2  &  2.0  &  130.1  & 2   & 27.3\\
%  4   &  4  &  290.1    &   3.0  &  76.8   & 3 & $2.0\times10^{-15}$ & 3.8    \\
%  8   &  8  &  1066.5  &   3.0  &  94.5   & 2 & $2.6\times10^{-15}$ & 11.3    \\
% 16  &  16 &  1619.8  &   2.0  &  122.7  & 2 & $3.5\times10^{-15}$ & 13.2 \\
% 24 &  24 &  2511.3  &  2.0   &  127.6  & 2 & $1.4\times10^{-14}$ & 19.7 \\
% 32 &  32 &  3554.2  &  2.0  &  130.1  & 2 & $5.6\times10^{-11}$ & 27.3\\
\hline
\end{tabular}
\end{center}
\caption{\label{serial_heat} Runtimes of the parallel and sequential BDF1 schemes for the 2-D nonlinear heat equation on grids with $N_t = 8, 16, 24, 32$,  and $N_x = N_y = 64$.
}
\end{table}

We also compare the computational times taken to solve this 2-D nonlinear heat equation with the sequential and parallel-in-time BDF1 schemes on the same sequence of uniform grids in Table~\ref{serial_heat} .  
%Table \ref{serial_heat} shows the computational time taken by the sequential and parallel-in-time BDF1 schemes to solve the 2-D nonlinear heat equation on a sequence of space-time grids with $N_t = 8, 16, 24, 32$ and $N_x = N_y = 64$. 
As follows from this comparison,  both the ParaDIn method and the corresponding sequential scheme converge quadratically in two Newton iterations except for $N_t=4$ that required three iterations. Furthermore, the new parallel-in-time BDF1 scheme  demonstrates nearly optimal speedup as compared with the sequential counterpart.  It should also be noted that the ParaDIn and sequential BDF1 solutions are nearly identical to each other on all grids considered.
\begin{table}[!h]
\begin{center}
\begin{tabular}{cccccc}
\hline
                                                &  Sequential BDF1 &   & \vline &  ParaDIn BDF1 &  \\
\hline
 $N_t\times N_x \times N_y$ & $ L_1$ error & $L_1$ rate  &\vline& $ L_1$ error & $L_1$ rate \\
\hline
   $6\times 6\times 6     $   &  $2.87\times 10^{-2}$  & $ - $      & &  $2.87\times 10^{-2}$   &  $-$       \\
 $12\times 12\times 12 $   &   $1.56\times 10^{-2}$  &  $0.88$  & &  $1.56\times 10^{-2}$  &  $0.88$  \\
 $18\times 18\times 18 $   &   $1.05\times 10^{-2}$  &  $0.97$  & &  $1.05\times 10^{-2}$   &  $0.97$  \\
 $24 \times 24\times 24$  &  $7.91\times 10^{-3}$   &  $1.00$  & &  $7.91\times 10^{-3}$   &  $1.00$    \\
 $30\times 30 \times 30$  &  $6.26\times 10^{-3}$  &  $1.05$  & & $6.26\times 10^{-3}$   &   $1.05$   \\
\hline
\end{tabular}
\end{center}
\caption{\label{grconv_Burgers} Error convergence of the ParaDIn and sequential BDF1 schemes for the 2-D Burgers equation.
}
\end{table}
\begin{figure}[!h] %!ht
   \begin{center}
                  \includegraphics[width=0.75\linewidth]{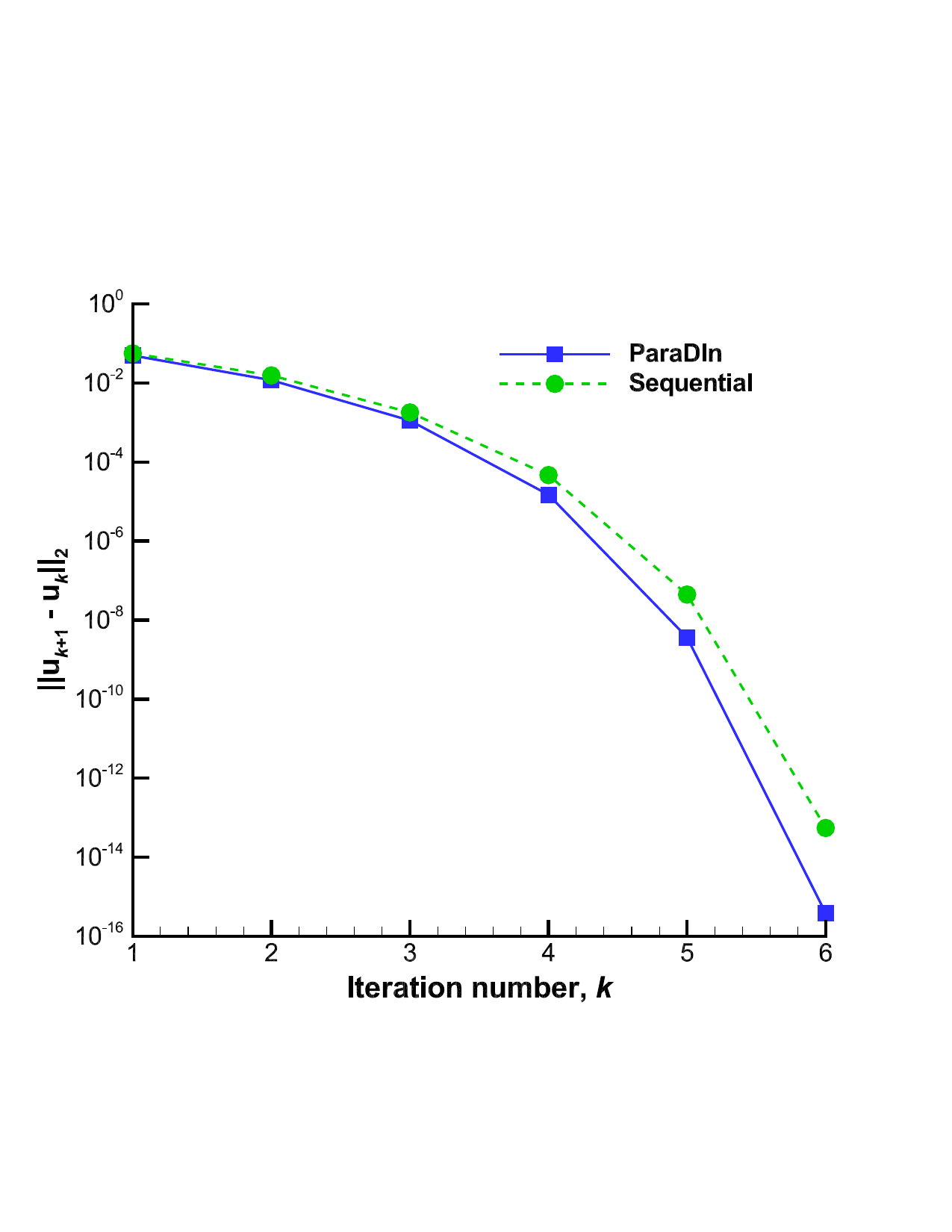} 
  \end{center}
	 \caption{Convergence histories of Newton iterations obtained with the parallel-in-time and sequential BDF1 schemes on the grid with $N_t=12$ and $N_x=N_y=63$ for the 2-D Burgers equation.}  
\label{convhist_Burgers}
\end{figure}

\subsection{2-D viscous Burgers equation}
%-----------------------------------------
The second test problem is  the 2-D viscous Burgers equation given by Eq. ~(\ref{eq:CLE}) with  $f(u) = g(u)=u^2/2.$ For this problem, we use the following exact solution:
\begin{equation}
\label{eq:Burgers}
u_{\rm ex} = \frac v2 \left[ 1 - \tanh\left( \frac{v(x+y-vt)}{4\mu}\right) \right],
\end{equation}
where $v$ is a shock speed that is set to be $0.5$. The viscosity coefficient $\mu$ is constant and set equal to $10^{-3}$, so that the viscous shock wave given by Eq. (\ref{eq:Burgers}) is not fully resolved on any grid considered and behaves as a strong discontinuity.  For the Burgers equation,  an initial guess for the ParaDIn method is constructed the same way as in the previous test problem with the coarsening factor of $c_f=3$ in each spatial direction and time.

The $L_1$ discretization error and its convergence rate  obtained with the ParaDIn and corresponding sequential methods are presented in Table \ref{grconv_Burgers}.  As expected, the parallel-in-time and sequential BDF1 schemes show an identical error convergence whose rate approaches to one as the grid is refined.
We compare convergence histories of Newton iterations obtained with the ParaDIn and sequential BDF1 schemes on the same grid with $N_t=12$ and $N_x=N_y=63$ in Fig. ~\ref{convhist_Burgers}.  Similar to the previous test problem,  the sequential and parallel-in-time BDF1 schemes  demonstrate a very similar convergence behavior for the Burgers equation.  Note,  however,  that both methods do not converge quadratically except for the last two iterations, as one can see in Fig. ~\ref{convhist_Burgers}.
\begin{table}[!h]
\begin{center}
\begin{tabular}{cccccc}
\hline
Number  &  $N_t$  & Serial ParaDIn  &   ParaDIn        &  Speedup  &  Parallel     \\
of cores  &               & runtime (s)  &  runtime  (s)   &                   &  efficiency  \\
\hline
  6  &   6  &  1385.2  &   242.2     &    5.7    &  95\% \\
  12 &  12  &  2846.7 &   253.9     &    11.2   &  93\% \\
 18  &  18 &  4239.4  &   259.7     &   16.3   &  91\% \\
 24 &  24 &  5639.6  &   262.0     &  21.5    &  90\% \\
 30 &  30 &  9436.9  &   355.6     &  26.5    & 88\% \\
%
% 12 &  12  &  721.6     &   94.7       &    7.6    &  95\% \\
% 18  &  18 &  1632.7   &   122.5     &   13.3   &  83\% \\
% 24 &  24 &  2527.4  &   127.2      &  19.9    &  82\% \\
% 30 &  30 &  3584.6  &   131.1      &  27.3    &  85\% \\
\hline
\end{tabular}
\end{center}
\caption{\label{speedup_Burgers} The speedup and parallel efficiency of the ParaDIn-BDF1 scheme for the 2-D Burgers equation on grids with $N_t = 6, 12,  18,24, 30$,  and $N_x = N_y = 63$.
}
\end{table}
%
%\caption{\label{tab3} Computational times (in seconds) taken to solve the 2-D nonlinear heat equation by the sequential SBDF1 and parallel-in-time PiTBDF1 schemes on a sequence of space-time grids with $N_t = 8, 12, 16, 20$ and $N_x = N_y = 32$.
%
\begin{table}[!h]
\begin{center}
\begin{tabular}{ccccccc}
\hline
%Number &  $N_t$  & Sequential  &Number of   &  ParaDIn         & Number of  & $\| {\bf u}_{\rm par} - {\bf u}_{\rm seq}  \|_2$  &  Speedup     \\
%of cores &               & runtime (s) &iterations    &  runtime (s)     & iterations   &                                                                             &                     \\
Number &  $N_t$  & Sequential  &Number of   &  ParaDIn         & Number of  &  Speedup     \\
of cores &               & runtime (s) &iterations    &  runtime (s)     & iterations   &                      \\
\hline
  6   &  6  &  1828.8   &   4.0  & 242.2  & 3   & 7.6   \\
 12  &  12 &  2891.9   &   3.08&  253.9 & 3   & 11.4 \\
 18  &  18 &  4178.5   &   3.0  &  259.7  & 3   & 16.1 \\
 24 &  24 &  5574.8   &  3.0   &  262.0 & 3   & 21.3 \\
 30 &  30 &  6993.3  &  3.0  &  355.6  & 4   & 19.7 \\
\hline
\end{tabular}
\end{center}
\caption{\label{seq_Burgers} Runtimes of the parallel-in-time and sequential BDF1 schemes for the 2-D Burgers equation on grids with $N_t = 6, 12, 18, 24, 30$,  and $N_x = N_y = 63$.
}
\end{table}

We evaluate the parallel performance of the ParaDIn method for unsteady problems with strong discontinuities by solving  the 2-D Burgers equation on a sequence of uniform rectangular grids with $N_t = 6, 12, 18,  24, 30$ and $N_x = N_y = 63$.    The speedup and parallel efficiency obtained with the ParaDIn-BDF1 scheme are presented in Table~\ref{speedup_Burgers}.  Similar to the previous test problem,  the new time-parallel BDF1 scheme demonstrates speedups which are close to their maximum possible values with a slight decrease in the parallel efficiency as the temporal grid is refined.  
Computational times required to solve the 2-D Burgers equation until $T_f=1$ by the sequential and parallel-in-time BDF1 schemes are presented in Table \ref{seq_Burgers}.  Since the viscous shock wave at $\mu=10^{-3}$ is not resolved on any of these grids, the local- and global-in-time Newton iterations associated with the sequential and parallel-in-time BDF1 schemes, respectively,  do not converge quadratically.  Despite this suboptimal convergence, both methods reduce the $L_2$ norm of the residual below a tolerance (which is set to be $1/10$ of the true discretization error) in about 3 Newton iterations.  On the coarsest grid with $N_t=6$,  the Newton method of  the sequential algorithm in average converges by one iteration slower than that of the ParaDIn method.  This slower convergence of the sequential algorithm is due to the fact that its initial guess, which is the solution at previous time step,  is less accurate  than the initial guess of the ParaDIn method, which is a coarse-grid solution obtained on the space-time grid with coarsening factor of $c_f = 3$.  Note that this convergence behavior reverses on the finest grid ($N_t=30$), because the time step size becomes sufficiently small,  thus providing a more accurate initial guess for the Newton method of the sequential algorithm at each time step.  
As expected,  the numerical solution obtained with the parallel-in-time BDF1 scheme converges to that of its sequential counterpart for all grids considered, because the decoupled system of equations (\ref{eq:Dec}) of the ParaDIn method is equivalent to the original fully coupled equations (\ref{eq:LocalNewton}-\ref{eq:GlobalNewton}) of the sequential algorithm.

\section{Conclusions}
In this paper, we have introduced and developed a new strategy for parallel-in-time integration of unsteady nonlinear differential equations  discretized by the implicit first-order backward difference (BDF1) scheme in time.
The global all-at-once system of nonlinear discrete equations is solved by the Newton method for all time levels simultaneously.  
The key idea of the proposed methodology, which is herein referred to as ParaDIn, is to recast the block bidiagonal system of linear equations at each Newton iteration in a form that is equivalent to its inverse.   As a result,   this system of equations is decoupled in time into independent blocks that can be solved in parallel.  We have proven that the decoupled and original all-at-once systems are equivalent to each other. Therefore, the ParaDIn method preserves the quadratic convergence rate of Newton iterations and provides a nearly ideal speedup for nonlinear problems with both smooth and discontinuous solutions.  For the proposed parallel-in-time BDF1 scheme,  each time level  is computed on its own computing core regardless of the number of degrees of freedom used for discretization of the spatial derivatives.  As has been shown in section 7,  some upper bound should be imposed on the total number of time steps that can be integrated in parallel by using the ParaDIn method.  This constraint is due to the fact that the condition number increases together with the number of time steps.
A preconditioning strategy  designed for circumventing this constraint will be presented in our future paper.  The numerical results show that the proposed ParaDIn-BDF1 scheme demonstrates a nearly ideal speedup and a parallel efficiency on up to 32 computing cores for the 2-D nonlinear heat and Burgers equations with smooth and discontinuous solutions.

\bigskip  
%% \noindent{\bf Declarations}
\noindent{\large\bf Acknowledgments}

%% \noindent{\it Funding}

The first author acknowledges the support from Army Research Office and Department of Defense through grants
W911NF-17-0443 and W911NF-23-10183.

%% \bigskip
%% \noindent{\it Conflict of interests}

%% The authors have no relevant financial or non-financial interests to disclose.

%% \bigskip  
%% noindent{\it Availability of Data and Materials}

%% The datasets generated and/or analyzed during the current study are available from the corresponding author on reasonable 
%% request.

%%%%%%%%%%%%%%%%%%%%-----------BIBLIOGRAPHY--------%%%%%%%%%%%%%%%%%%%

\end{document}